\definecolor{mylinkcolor}{rgb}{0.5,0.0,0.0}
\definecolor{myurlcolor}{rgb}{0.0,0.0,0.75}
\newcommand{\F}{\mathbf{F}}
\newcommand{\Z}{\mathbf{Z}}
\newcommand{\Q}{\mathbf{Q}}
\renewcommand{\P}{\mathbf{P}}
\newcommand{\Qbar}{\overline{\Q}}
\newcommand{\kbar}{\overline{k}}
\newcommand{\Gal}{{\rm Gal}}
\newcommand{\Jac}{\operatorname{Jac}}
\newcommand{\Pic}{\operatorname{Pic}}
\newcommand{\Div}{\operatorname{Div}}
\renewcommand{\div}{\operatorname{div}}
\newcommand{\ord}{\operatorname{ord}}
\newcommand{\lc}{\operatorname{lc}}
\newcommand{\Res}{\operatorname{Res}}
\newcommand{\opcount}[1]{\hfill\textbf{[#1]}}
\newtheorem{theorem}{Theorem}[section]
\newtheorem{lemma}[theorem]{Lemma}
\newtheorem{proposition}[theorem]{Proposition}
\theoremstyle{definition}
\newtheorem{definition}[theorem]{Definition}
\newtheorem{remark}[theorem]{Remark}
\title{Fast Jacobian arithmetic for hyperelliptic curves of genus 3}
\author{{\small Andrew V. Sutherland}}
\thanks{The author was supported by NSF grants DMS-1115455 and DMS-1522526, and Simons Foundation grant 550033.}
\begin{document}

\begin{abstract}
We consider the problem of efficient computation in the Jacobian of a hyperelliptic curve of genus 3 defined over a field whose characteristic is not 2.
For curves with a rational Weierstrass point, fast explicit formulas are well known and widely available.
Here we address the general case, in which we do not assume the existence of a rational Weierstrass point, using a balanced divisor approach.
\end{abstract}

\maketitle

\section{Introduction}
Like elliptic curves, Jacobians of hyperelliptic curves over finite fields are an important source of finite abelian groups in which the group operation can be made fully explicit and efficiently computed.
This has given rise to many cryptographic applications, including Diffie-Hellman key exchange and pairing-based cryptography, and has also made it feasible to experimentally investigate various number-theoretic questions related to the $L$-series of abelian varieties over number fields, including analogs of the Birch and Swinnerton-Dyer conjecture, the Koblitz-Zywina conjecture, the Lang-Trotter conjecture, and the Sato-Tate conjecture, each of which was originally formulated for elliptic curves but has a natural generalization to abelian varieties of higher dimension.
They can also be used to study analogs of the Cohen-Lenstra heuristics \cite{CL} and related questions in arithmetic statistics that were originally formulated for quadratic number fields but have a natural analog for quadratic function fields \cite{Achter,FW}.

Thanks to work by many authors, there are several algorithms available for Jacobian arithmetic in genus~2 that have been heavily optimized (primarily with a view toward cryptographic applications).
For hyperelliptic curves of genus $g>2$, fully general algorithms have been developed only in the last decade, and fast explicit formulas are typically available only for curves that have a rational Weierstrass point.
This simplifying assumption makes it easier to encode elements of the Jacobian using unique representatives of their divisor class as described by Mumford~\cite{Mumford} and later exploited by Cantor \cite{Cantor}, who gave the first fully explicit algorithm for computing in the Jacobian of a hyperelliptic~curve with a rational Weierstrass~point.

But most hyperelliptic curves do not have a rational Weierstrass point.
Over finite fields the proportion of such curves is roughly $1/(2g)$, and over a number field the proportion is zero (as an asymptotic limit taken over curves of increasing height).  In particular, many arithmetically interesting examples of hyperelliptic curves do not have any rational Weierstrass points. This includes, for example, all 19 of the modular curves $X_0(N)$ that are hyperelliptic.\footnote{This follows from results of Ogg \cite{Ogg74, Ogg78}, who both determined the $N$ for which $X_0(N)$ is hyperelliptic and gave a criterion for rational Weierstrass points on $X_0(N)$ that allows one to rule out the existence of any such points on the hyperelliptic $X_0(N)$.}

In this article we treat hyperelliptic curves of genus $g=3$ over fields whose characteristic is not 2.
Our formulas are based on the \emph{balanced divisor} approach introduced by David J. Mireles Morales in his thesis \cite{Morales} and presented by Galbraith, Harrison, and Mireles Morales in \cite{GHM}.
The basic idea is to represent divisors of degree zero as the difference of an effective divisor of degree $g$ and an effective divisor $D_\infty$ whose support is ``balanced'' over two points at infinity (see \S\ref{section:balanced} for further details).
This is one of two approaches to generalizing Cantor's algorithm; the other is to work in what is known as the \emph{infrastructure} of a ``real'' hyperelliptic curve \cite{JSS,SSW}.
These two approaches were analyzed in \cite{JRS} (using formulas in \cite{EJS,GHM}), which concluded that in genus 2, and even genus in general, the balanced divisor approach is more efficient \cite[\S 8]{JRS}.
When the genus is odd, however, the divisor $D_\infty$ cannot be perfectly balanced; genus~3 thus presents an interesting test case for the balanced divisor approach.

Another reason to be particular interested in the genus~3 case, and the main motivation for this work, is that group computations in the Jacobian play a small but crucial role in efficiently computing the $L$-series of a genus~3 curve.
Recall that for a curve $C/\Q$ we may define its $L$-series as an Euler~product
\[
L(C,s)\coloneqq\prod_pL_p(p^{-s})^{-1},
\]
where $L_p\in\Z[T]$ is an integer polynomial of degree at most $2g$; for primes $p$ of good reduction (all but finitely many), the degree is exactly $2g$ and $L_p(T)$ is the numerator of the zeta function
\[
Z_{C_p}(T)\coloneqq \exp\left(\sum_{r=1}^\infty \#C_p(\F_{p^r})\frac{T^r}{r}\right)=\frac{L_p(T)}{(1-T)(1-pT)},
\]
where $C_p$ denotes the reduction of $C$ modulo $p$.
Using the average polynomial-time algorithm described in \cite{Harvey,HS14,HS16}, for hyperelliptic curves of genus $g$ one can simultaneously compute $L_p(T)\bmod p$ at all primes $p\le N$ of good reduction in time $\tilde{O}(g^3N\log^3 N)$.
In principle one can use a generalization of the algorithm in~\cite{Harvey} to compute $L_p(T)$ modulo higher powers of $p$ sufficient to determine $L_p\in \Z[T]$ (in genus $3$, computing $L_p(T)\bmod p^2$ suffices for $p>144$), but this requires a more intricate implementation and is much more computationally intensive than computing $L_p(T)\bmod p$.

Alternatively, as described in \cite{Elkies,KS}, for curves of genus $3$ one can use $\tilde{O}(p^{1/4})$ group operations in the Jacobian of $C_p$ and its quadratic twist to uniquely determine $L_p\in \Z[T]$.
Within the practical range of computation (where $N\le 2^{32}$, say), the cost of doing this is negligible compared to the cost of computing $L_p(T)\bmod p$, \textit{provided that the group operations can be performed efficiently}.
This is the goal of the present work.

The algorithms we describe here played a key role in \cite{HMS}, which generalizes the algorithm of \cite{HS16} to treat genus 3 curves that are hyperelliptic over~$\Qbar$, but not necessarily over $\Q$ (they may be degree-2 covers of pointless conics).  The output of this algorithm is $L_p(T)L_p(-T)\bmod p$, and, as explained in \cite[\S 7]{HS16}, one can again use $\tilde O(p^{1/4})$ group operations in the Jacobian to uniquely determine $L_p\in\Z[T]$ given this information.
As can be seen in Table 1 of \cite{HMS}, which shows timings obtained using a preliminary version of the formulas presented in this article, the time spent on group operations is negligible compared to the time spent computing $L$-polynomials modulo~$p$ (less than one~tenth).
This was not true of initial attempts that relied on a generic implementation of the balanced divisor approach included in Magma \cite{Magma}, which has not been optimized for hyperelliptic curves of genus 3.
For the application in \cite{HMS}, the primary use of our addition formulas occurs as part of a baby-steps giant-steps search in which field inversions can easily be combined by taking steps in parallel \cite[\S 4.1]{KS}.
The incremental cost of a field inversion is then just three field multiplications, making affine coordinates preferable to projective coordinates (by a wide margin); we thus present our formulas in affine coordinates, although they can be readily converted to projective coordinates if desired.

The explicit formulas we obtain are nearly as fast as the best known formulas for genus~3 hyperelliptic curves that have a rational Weierstrass point \cite{CFADLNV,FWG,GKP,GMACT,KGMCT,Nyukai,NMCT,WPP}, which have been extensively optimized.\footnote{Indeed, our addition formula uses exactly the same number of field inversions and multiplications as the formula in \cite[Alg.\,14.52]{CFADLNV} for genus 3 curves with a rational Weierstrass point in odd characteristic (but as noted above, this formula has since been improved).}
Our formulas for addition/doubling have a cost of \textbf{I+79M}/\textbf{I+82M}, versus \textbf{I+67M}/\textbf{I+68M} for the fastest known formulas for genus 3 hyperelliptic curves with a rational Weierstrass point \cite{Nyukai,NMCT}, where \textbf{I} denotes a field inversion and \textbf{M} denotes a field multiplication.  This performance gap is comparable to that seen in genus 2, where the fastest addition formulas for curves without a rational Weierstrass point cost \textbf{I+28M}/\textbf{I+32M} \cite{GHM}, versus \textbf{I+24M}/\textbf{I+27M} \cite{Lange} for genus 2 curves with a rational Weierstrass point.

Contemporaneous with our work, Rezai Rad \cite{Rad} has independently obtained formulas for genus~3 hyperelliptic curves without a rational Weierstrass point using a modified infrastructure approach that exploits a map from the infrastructure to the Jacobian whose image consists of balanced divisors, obtaining a cost of \textbf{I+75M}/\textbf{I+86M} (for affine coordinates in odd characteristic).
This raises the interesting question of whether it is possible to integrate the faster addition formula in \cite{Rad} with the faster doubling formula presented here.

\section{Background}

We begin by recalling some basic facts about hyperelliptic curves and their Jacobians.

\subsection{Hyperelliptic curves}

A (smooth, projective, geometrically integral) curve $C$ over a field $k$ is said to be \emph{hyperelliptic} if its genus $g$ is at least 2 and it admits a 2-1 morphism $\phi\colon C\to \P^1$ (the \emph{hyperelliptic map}).
The map $\phi$ determines an automorphism $P\to \overline P$ of $C$, the \emph{hyperelliptic involution}, which fixes the fibers of $\phi$ and acts trivially only at ramification points.
The fixed points of the hyperelliptic involution are precisely the \emph{Weierstrass points} of $C$ (the points $P$ for which there exists a non-constant function on~$C$ with a pole of order less than $g+1$ at $P$ and no other poles).
The Riemann-Hurwitz formula implies that a hyperelliptic curve of genus $g$ has exactly $2g+2$ Weierstrass points.
Some authors require the hyperelliptic map $\phi$ to be defined over $k$ (rationally hyperelliptic), while others only require it to be defined over $\kbar$ (geometrically hyperelliptic); we shall assume the former.  When $k$ is a finite field the distinction is irrelevant because~$\P_k^1$ has no non-trivial twists (these would be genus 0 curves with no rational points, which do not occur over finite fields).

Provided $\mathrm{char}(k)\ne 2$, which we henceforth assume, every hyperelliptic curve $C/k$ has an affine model of the form
\[
y^2=f(x),
\]
with $f\in k[x]$ separable of degree $2g+1$ or $2g+2$.
The hyperelliptic map $\phi$ sends each affine point $(x,y)$ on $C$ to $(x:1)$ on $\P^1$, and the hyperelliptic involution swaps $(x,y)$ and $(x,-y)$.
The projective closure of the model $y^2=f(x)$ has a singularity on the line $z=0$ (points on this line are said to lie \emph{at infinity}); the curve $C$ is obtained by desingularization. Equivalently, $C$ is the smooth projective curve with function field $k(C)\coloneqq k[x,y]/(y^2-f(x))$;
the field $k(C)$ is a quadratic extension of the rational function field $k(x)\simeq k(\P^1)$, and the inclusion map $\phi^*\colon k(\P^1)\hookrightarrow k(C)$ corresponds to the hyperelliptic map $\phi$.

When $\deg f=2g+1$, the model $y^2=f(x)$ has a unique rational point at infinity that is also a Weierstrass point.
Conversely, if $C$ has a rational Weierstrass point, we can obtain a model of the form $y^2=f(x)$ with $\deg f=2g+1$ by moving this point to infinity.  We can then make $f$ monic via the substitutions $x\mapsto \lc(f)x$ and $y\mapsto \lc(f)^gy$, after dividing both sides of $y^2=f(x)$ by $\lc(f)^{2g}$.

If $C$ does not have a rational Weierstrass point then we necessarily have $\deg f=2g+2$, and there are either 0 or 2 rational points at infinity, depending on whether the leading coefficient of $f$ is a square in $k^\times$ or not.
Provided that $C$ has some rational point~$P$, moving this point to infinity ensures that there are two rational points at infinity (the other is $\overline P\ne P$).
This makes the leading coefficient of $f$ a square, and we can then make $f$ monic by replacing $y$ with $\sqrt{\lc(f)}y$ and dividing through by $\lc(f)$.

In summary, if $C$ is a hyperelliptic curve with a rational point then it has a model of the form $y^2=f(x)$ with $f$ monic of degree $2g+1$ or $2g+2$.
The former is possible if and only if $C$ has a rational Weierstrass point and the later can always be achieved provided that $C$ has a rational point that is not a Weierstrass point.
If $k$ is a finite field of cardinality $q$, the Weil bound $\#C(k)\ge q +1-2g\sqrt{q}$ guarantees that $C$ has a rational point whenever $q > 4g^2$, and it is guaranteed to have a rational point that is not a Weierstrass point when $q > 4g^2+2g+2$.
For $g=3$ this means that if $k$ is a finite field of odd characteristic and cardinality at least 47, then $C$ has a model of the form $y^2=f(x)$ with $f$ monic of degree 8; in what follows, we shall assume that the hyperelliptic curves $C$ we work with have such a model.
\begin{remark}
In the literature, hyperelliptic curves with a model $y^2=f(x)$ that has two rational points at infinity are sometimes called ``real'' hyperelliptic curves (those with one rational point at infinity are called ``imaginary'').
We avoid this abuse of terminology as it refers to the model and is not an intrinsic property of the curve.
As noted above, in the setting of interest to us every hyperelliptic curve can be viewed as a ``real'' hyperelliptic curve.
\end{remark}

\subsection{Divisor class groups of hyperelliptic curves}

The \emph{Jacobian} of a curve $C/k$ of genus $g$ is an abelian variety $\Jac(C)$ of dimension $g$ that is canonically determined by $C$; see \cite{Milne} for a formal construction.
Describing $\Jac(C)$ as an algebraic variety is difficult, in general, but we are only interested in its properties as an abelian group.  Provided that $C$ has a $k$-rational point, then by \cite[Thm.~1.1]{Milne}, we may functorially identify the group $\Jac(C)$ with the \emph{divisor class group} $\Pic^0(C)$, the quotient of the group $\Div^0(C)$ of divisors of degree~0 by its subgroup of principal divisors.  We recall that a \emph{divisor} on~$C$ can be defined as a formal sum $D=\sum n_P P$ over points $P\in C(\kbar)$ with only finitely $n_P$ nonzero; the \emph{degree} of $D$ is $\deg(D)\coloneqq\sum n_P$.
A divisor is said to be \emph{principal} if it is of the form $\div(\alpha):=\sum_P\ord_P(\alpha)P$ for some function $\alpha\in k(C)$; such divisors necessarily have degree 0.

We are interested in the $k$-rational points of $\Jac(C)$.  Under our assumption that $C$ has a $k$-rational point, these correspond to divisor classes $[D]$ of $k$-rational divisors $D\in \Div^0(C)$ (this means $D=\sum n_PP$ is fixed by $\Gal(\kbar/k)$, even though the points $P$ in its support need not be).
In order to describe the divisor classes $[D]$ explicitly, we now assume that $C$ is a hyperelliptic curve that has a rational point, and fix a hyperelliptic map $\phi\colon C\to \P^1$.
We say that a point $P$ on $C$ is \emph{affine} if it lies above an affine point $(x:1)$ on $\P^1$ and we call $P$ a point \emph{at infinity} if lies above the point $(1:0)$ on $\P^1$.

Recall that a divisor $D=\sum n_P P$ is \emph{effective} if $n_P\ge 0$ for all $P$; an effective divisor can always be written as $\sum_i P_i$, where the $P_i$ need not be distinct.

\begin{definition}
An effective divisor $D=\sum P_i$ on a hyperelliptic curve $C$ is \emph{semi-reduced} if $P_i\ne \overline P_j$ for any $i\ne j$; a semi-reduced divisor whose degree does not exceed the genus of $C$ is said to be \emph{reduced}.
\end{definition}

\begin{lemma}
Let $C/k$ be a hyperelliptic curve that has a rational point.  Every rational divisor class $[D]$ in $\Pic^0(C)$ can be represented by a divisor whose affine part is semi-reduced.
\end{lemma}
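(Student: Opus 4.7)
The plan is to take any representative $D \in \Div^0(C)$ of the class, write $D = D_{\text{aff}} + D_\infty$ with $D_\infty$ supported at the (at most two) points at infinity, and modify $D_{\text{aff}}$ by adding principal divisors of polynomials in $k[x] \subset k(C)$ until it becomes semi-reduced. The computation driving the whole argument is the divisor formula
\[
\div(x-a) = P + \overline{P} - D_\infty^{(x)},
\]
valid for any affine point $P$ with $\phi(P) = (a{:}1)$, where $D_\infty^{(x)}$ is the $k$-rational divisor of poles of $x$ at infinity (of degree $2$), and where $P + \overline{P}$ collapses to $2W$ when $P = W$ is a Weierstrass point. Subtracting an integer multiple of $\div(x-a)$ from $D$ shifts the coefficients at the two points above $a$ in parallel while altering nothing but the part of $D$ at infinity, which we are free to change.

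With this tool I would process each fiber $\phi^{-1}(a)$ appearing in the support of $D_{\text{aff}}$ exactly once. In the generic case $\phi^{-1}(a) = \{P, \overline{P}\}$ with $P \ne \overline{P}$, set $m = \min(n_P, n_{\overline P})$ and subtract $m \cdot \div(x-a)$ from $D$; the coefficients at $P$ and $\overline{P}$ become $(n_P - m, n_{\overline P} - m)$, which are both non-negative with the smaller equal to $0$. In the Weierstrass case $\phi^{-1}(a) = \{W\}$, set $m = \lfloor n_W/2 \rfloor$ and subtract $m \cdot \div(x-a)$; the coefficient at $W$ becomes $n_W \bmod 2 \in \{0,1\}$. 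After handling every such fiber, the affine part is effective and for each $x$-coordinate at most one of the two preimages appears with positive multiplicity, which is precisely the definition of semi-reduced.

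To keep the representative $k$-rational, I would carry out the previous step one Galois orbit of $x$-coordinates at a time. Since $D$ is $\Gal(\kbar/k)$-invariant and conjugation commutes with the hyperelliptic involution (which is defined over $k$), the multiplicities $n_P$ are constant along the Galois orbit of any affine point, so the same integer $m$ applies uniformly across the orbit. For a Galois orbit $\{a_1,\ldots,a_r\} \subset \kbar$ of affine $x$-coordinates the polynomial $g(x) = \prod_i (x-a_i)$ lies in $k[x]$, so $\div(g)$ is a $k$-rational principal divisor, and subtracting $m \cdot \div(g)$ from $D$ simultaneously applies the prescription above to every $a_i$ in the orbit. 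The resulting divisor is $k$-rational with semi-reduced affine part. The main step, such as it is, is bookkeeping around Galois orbits and the Weierstrass case; the content is entirely the divisor identity for $x-a$ and the observation that the process terminates because $D_{\text{aff}}$ has finite support.
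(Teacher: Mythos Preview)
Your proof is correct and follows essentially the same approach as the paper: both subtract pullbacks of principal divisors on $\P^1$ (your $\div(g)$ for $g\in k[x]$ is precisely the paper's $\phi^*E$ with $E$ a principal divisor on $\P^1$) to strip pairs $P+\overline P$ from the affine part. The paper compresses your fiber-by-fiber and orbit-by-orbit bookkeeping into the single decomposition $D_0=D_1+\overline{D_1}+D_2$ after first arranging the affine part to be effective, whereas you handle negative coefficients and semi-reduction in one pass via $m=\min(n_P,n_{\overline P})$; this is a difference of presentation, not of content.
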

\begin{proof}
By adding a suitable principal divisor to $D$ if necessary, we can assume the affine part $D_0$ of $D$ is effective.
If $D_0$ is not semi-reduced it can be written as $D_1+\overline D_1+D_2$ with $D_2$ rational and semi-reduced; if we now take a principal divisor $E$ on $\P^1$ with affine part $\phi_*D_1$ and subtract $\phi^*E$ from~$D$ we obtain a linearly equivalent rational divisor with affine part $D_2$ (here $\phi\colon C\to \P^1$ is the hyperelliptic map).
\end{proof}

Let us now fix a model $y^2=f(x)$ for our hyperelliptic curve $C$ that has a rational point at infinity.
A semi-reduced affine divisor $D=\sum P_i$ can be compactly described by its \emph{Mumford representation} $\div[u,v]$: let $P_i=(x_i,y_i)$, define $u(x)\coloneqq\prod_i(x-x_i)$, and let $v$ be the unique polynomial of degree less than $\deg u$ for which $f-v^2$ is divisible by $u$.  As explained in \cite[\S 1]{Mumford}, this amounts to requiring that $v(x_i)=y_i$ with multiplicity equal to the multiplicity of $P_i$ in $D$; when the $x_i$ are distinct $v$ can be computed via Lagrange interpolation in the usual way.  If $D$ is a rational divisor, then $u,v\in k[x]$.

Conversely, suppose we are given $u,v\in k[x]$ with $u$ monic, $\deg v<\deg u$, and $f-v^2$ is divisible by~$u$.
Write $u(x)=\prod_i(x-x_i)$, define $P_i\coloneqq (x_i,v(x_i))$; the affine points $P_i$ lie in $C(\kbar)$ because $u|(f-v^2)$ implies $f(x_i)-v(x_i)^2$ is divisible by $u(x_i)=0$, and therefore $v(x_i)^2=f(x_i)$.  We now define
\[
\div[u,v] \coloneqq \sum_i P_i.
\]
The effective divisor $\div[u,v]$ is rational, since $u,v\in k[x]$, and it is semi-reduced: if $P_i=\overline P_j$ then we must have $x_i=x_j$ and $v(x_i)=-v(x_j)=-v(x_i)=0$; if $i\ne j$ then $x_i$ is a double root of $u$ and of $v$, and therefore also a double root of $f$, but this is impossible since $f$ is separable.
There is thus a one-to-one correspondence between semi-reduced affine divisors and Mumford representations $\div[u,v]$, and $\div[u,v]$ is rational if and only if $u,v\in k[x]$.

Let us now fix an effective divisor $D_\infty$ of degree $g$ supported on rational points at infinity; if $C$ has one rational point $P_\infty$ at infinity we may take $D_\infty = gP_\infty$, and if $C$ has two rational points $P_\infty$ and $\overline P_\infty$ at infinity we may take $D_\infty=\lceil g/2\rceil P_\infty+\lfloor g/2\rfloor \overline P_\infty$.

\begin{proposition}\label{prop:unique}
Let $C$ be a hyperelliptic curve of genus $g$ and let $D_\infty$ be an effective divisor of degree $g$ supported on rational points at infinity.  Each rational divisor class in $\Pic^0(C)$ can be uniquely written as $[D_0-D_\infty]$, where $D_0$ is an effective rational divisor of degree $g$ whose affine part is reduced.
\end{proposition}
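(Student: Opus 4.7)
The plan is to establish existence by Riemann-Roch combined with the conjugate-pair reduction used in the preceding lemma, and uniqueness by analyzing the function that would realize a linear equivalence between two purported representatives.

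For existence, I would apply Riemann-Roch over $k$ to the degree-$g$ rational divisor $D+D_\infty$: since $\deg(D+D_\infty)=g$, we have $\ell_k(D+D_\infty)\ge 1$, so there exists an effective rational divisor $E\sim D+D_\infty$ of degree $g$. If the affine part of $E$ contains a conjugate pair $P+\overline P$ (grouped into Galois orbits so that the relevant $x(P)\in k$), subtract the principal divisor $\div(x-x(P))=P+\overline P-\phi^*((1:0))$; this trades the affine conjugate pair for an effective contribution at infinity, yielding an effective rational divisor of the same degree with strictly smaller affine part. Iterating produces the desired $D_0$, whose affine part is semi-reduced of degree at most $g$ (hence reduced), with $[D_0-D_\infty]=[D]$.

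For uniqueness, suppose $D_0$ and $D_0'$ both satisfy the hypotheses and $D_0-D_0'=\div\alpha$ for some $\alpha\in k(C)^\times$. I write $\alpha=P(x)+Q(x)y$ with $P,Q\in k(x)$ and let $\sigma$ denote the hyperelliptic involution. The norm $N(\alpha)=P^2-Q^2f\in k(x)^\times$ has divisor $\phi^*\bigl(\div_{\P^1}N(\alpha)\bigr)=(D_0+\sigma D_0)-(D_0'+\sigma D_0')$. Since the affine parts are semi-reduced, they are disjoint from their $\sigma$-images (outside Weierstrass points), so $D_0^{\mathrm{aff}}+\sigma D_0^{\mathrm{aff}}=\phi^*\phi_*D_0^{\mathrm{aff}}$; the affine part of the displayed identity then pins down the common Mumford $u$-polynomial. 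The $v$-polynomials agree by evaluating $\alpha$ at the affine zeros (where $\alpha$ vanishes but its $\sigma$-conjugate does not), and the infinity parts agree by matching pole orders of $\alpha$ at $\infty_+$ and $\infty_-$ against the degree decomposition of $P$ and $Q$.

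The main obstacle will be the uniqueness step, especially in the special case $\ell(D_0)\ge 2$ where the linear system $|D_0|$ admits multiple effective representatives. The argument must leverage the semi-reduced constraint on the affine part to rule out any $g^1_2$-swap $\infty_++\infty_-\leftrightarrow P+\overline P$, which would introduce a conjugate pair into the affine part and violate the hypothesis. Carefully tracking how this restricts the infinity coefficients $(m_+,m_-)$ and ultimately enforces $D_0=D_0'$ is the heart of the argument.
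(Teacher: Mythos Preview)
The paper itself does not prove this proposition; it simply cites Proposition~1 of \cite{GHM}, which in turn rests on Propositions~3.1 and~4.1 of \cite{PR}. So your proposal to give a direct argument is not a reproduction of anything in the paper---it would be new content.

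Your existence argument is fine and is essentially the argument of the lemma immediately preceding the proposition: Riemann--Roch over~$k$ produces an effective rational $E\sim D+D_\infty$ of degree~$g$, and trading each Galois orbit of affine conjugate pairs for copies of $P_\infty+\overline P_\infty$ (via $\div g(x)$ with $g$ the minimal polynomial of the orbit of $x$-coordinates) shrinks the affine part while keeping the divisor effective, rational, and of degree~$g$.

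The uniqueness argument as written has a real gap. From the norm identity you correctly obtain $N(\alpha)=c\,u/u'$ for some $c\in k^\times$, but this does \emph{not} ``pin down the common Mumford $u$-polynomial'': nothing yet forces $u=u'$, and your later steps (matching $v$ and the infinity part) presuppose it. The content you relegate to the ``main obstacle'' paragraph is in fact the entire argument, and the norm computation is a detour. A clean way to execute your $g^1_2$-swap idea directly: write $D_0=s(P_\infty+\overline P_\infty)+B_0$ with $s=\min(m_+,m_-)$, so that $B_0$ is \emph{globally} semi-reduced of degree $g-2s$. The standard description of linear systems on a hyperelliptic curve (e.g.\ \cite[\S1]{Mumford}) gives $\ell(B_0)=1$ and hence
\[
|D_0|\;=\;\Bigl\{\textstyle\sum_{i=1}^{s}(Q_i+\overline{Q_i})+B_0 : Q_i\in C\Bigr\},
\]
with $B_0$ the base locus. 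If any $Q_i$ is affine, the affine part of the corresponding member contains the conjugate pair $Q_i+\overline{Q_i}$ (or $2W$ for a Weierstrass $Q_i$), violating semi-reducedness. Thus the unique member of $|D_0|$ with semi-reduced affine part is $s(P_\infty+\overline P_\infty)+B_0=D_0$ itself, and $D_0'=D_0$. This replaces the norm computation entirely and makes precise the ``ruling out $g^1_2$-swaps'' that you correctly identified as the heart of the matter.
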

\begin{proof}
See Proposition 1 in \cite{GHM}, which follows from Propositions 3.1 and 4.1 of \cite{PR} (provided the support of $D_\infty$ is rational, which we have assumed).
\end{proof}

\begin{remark}
When $g$ is even it is not actually necessary for the points at infinity to be rational; the divisor $D_\infty=(g/2)(P_\infty+\overline P_\infty)$ will be rational in any case.
Indeed, as astutely observed in \cite{GHM}, when~$C$ has even genus and no rational Weierstrass points, it is computationally advantageous to work with a model for $C$ that does not have rational points at infinity.  But this will not work when the genus is odd because we do need $D_\infty$ to be rational (Proposition~\ref{prop:unique} is false otherwise).
\end{remark}

\section{Hyperelliptic divisor class arithmetic using balanced divisors}\label{section:balanced}

In this section we summarize the general formulas for Jacobian arithmetic using balanced divisors.  Our presentation is based on \cite{GHM}, but we are able to make some simplifications by being more specific about our choice of $D_\infty$ and unraveling a few definitions (we also introduce some new notation).  We refer the reader to \cite{GHM,Morales} for details and proofs of correctness.  In the next section we specialize these formulas to the case $g=3$ and optimize for this case.

Let us first fix a model $y^2=f(x)$ for a hyperelliptic curve $C/k$ of genus~$g$ with rational points $P_\infty\coloneqq (1:1:0)$ and  $\overline{P}_\infty\coloneqq (1:-1:0)$ at infinity (in weighted projective coordinates), and let us define $D_\infty\coloneqq \lceil g/2\rceil P_\infty + \lfloor g/2\rfloor \overline P_\infty$.
This implies that $f$ is monic of degree $2g+2$; as noted above, this can be assumed without loss of generality if $C$ has any rational points that are not Weierstrass points.
The case where $C$ has a rational Weierstrass point is better handled by existing algorithms in any case, so the only real constraint we must impose is that $C$ have a rational point.\footnote{The assumption that $C$ has a rational point is required by any algorithm that represents rational elements of $\Pic^0(C)$ using rational divisors (even though this is not always explicitly stated in the literature).  As observed in \cite[p.\ 287]{Poonen}, without this assumption a rational divisor class need not contain any rational divisors.}
The assumption that $\textrm{char}(k)\ne 2$ is made purely for the sake of convenience, the algorithms in \cite{GHM,Morales} work in any characteristic.

Proposition~\ref{prop:unique} implies that we can uniquely represent each rational divisor class in $\Pic^0(C)$ by a triple $(u,v,n)$, where $\div[u,v]$ is a rational reduced affine divisor in Mumford notation (so $u,v\in k[x]$ satisfy $\deg v < \deg u$, with $u$ a monic divisor of $f-v^2$) with $\deg u\le g$, and $n$ is an integer with $0\le n\le g-\deg u$).
The triple $(u,v,n)$ corresponds to the divisor
\[
\div[u,v,n]\coloneqq \div[u,v]+nP_\infty+(g-\deg u-n)\overline P_\infty - D_\infty.
\]
Whenever we write $\div[u,v,n]$ we assume that $u,v,n$ are as above.
In this notation
\[
\div [1,0,\lceil g/2\rceil] = \div[1,0]+\lceil g/2\rceil P_\infty+(g-0-\lceil g/2\rceil)\overline P_\infty - D_\infty = 0,
\]
is the unique representative of the trivial divisor class in $\Pic^0(C)$.

At intermediate steps in our computations we shall need to work with divisors whose affine parts are semi-reduced but not reduced.
Given a semi-reduced affine divisor $\div[u,v]$ with $\deg u\le 2g$ and an integer $n$ with $0\le n \le 2g-\deg u$, we define
\[
\div[u,v,n]^*\coloneqq \div[u,v]+nP_\infty+(2g-\deg u-n)\overline P_\infty -2D_\infty,
\]
and whenever we write $\div[u,v,n]^*$ we assume that $u,v,n$ are as above (in particular, $\deg u+n \le 2g$).

We begin by precomputing the unique monic polynomial $V$ for which $\deg(f-V^2)\le g$.
This auxiliary polynomial is determined by the top $g+1$ coefficients of $f$ and will be needed in what follows.
\medskip

\noindent
\textbf{Algorithm} \textsc{Precompute}

\noindent
Given $f(x)=x^{2g+2}+f_{2g+1}x^{2g+1}+\cdots+f_1x+f_0$, compute the monic $V(x)$ for which $\deg(f-V^2)\le g$.
\begin{enumerate}[1.]
\item Set $V_{g+1}\coloneqq 1$.
\item For $i=g,g-1,\ldots, 0$ compute $c\coloneqq f_{g+1+i} - \sum_{j=i+1}^{g+1} V_jV_{g+1+i-j}$ and set $V_i\coloneqq c/2$.
\item Output $V(x)\coloneqq x^{g+1}+V_gx^g+\cdots + V_1x+V_0$.
\end{enumerate}
\medskip

We now give the basic algorithm for composition, which is essentially the same as the first step in Cantor's algorithm \cite{Cantor}.  In all of our algorithms, when we write $a\bmod b$ with $a,b\in k[x]$ and $b$ nonzero, we denote the unique polynomial of degree less than $\deg b$ that is congruent to $a$ modulo $b$ (the zero polynomial if $\deg b=0$), and for any divisors $D_1,D_2\in \Div(C)$ we write $D_1\sim D_2$ to denote linear equivalence (meaning that $D_1-D_2$ is principal).
\medskip

\noindent
\textbf{Algorithm} \textsc{Compose}

\noindent
Given $\div[u_1,v_1,n_1]$ and $\div[u_2,v_2,n_2]$, compute $\div[u_3,v_3,n_3]^*$ such that $$\div[u_1,v_1,n_1]+\div[u_2,v_2,n_2]\ \sim\ \div[u_3,v_3,n_3]^*.$$
\begin{enumerate}[1.]
\item Use the Euclidean algorithm to compute monic $w\coloneqq\gcd(u_1,u_2,v_1+v_2)\in k[x]$ and $c_1,c_2,c_3\in k[x]$ such that $w=c_1u_1+c_2u_2+c_3(v_1+v_2)$.
\item Let $u_3\coloneqq u_1u_2/w^2$ and let $v_3\coloneqq (c_1u_1v_2+c_2u_2v_1+c_3(v_1v_2+f))/w \bmod u_3$.
\item Output $\div[u_3,v_3,n_1+n_2+\deg w]^*$.
\end{enumerate}
\medskip

To reduce the divisor $\div[u_3,v_3,n_3]^*$ output by \textsc{Compose} to the unique representative of its divisor class we proceed in two steps.
The first is to repeatedly apply the algorithm below to obtain a divisor whose affine part is semi-reduced with degree at most $g+1$.
\medskip

\noindent
\textbf{Algorithm} \textsc{Reduce}

\noindent
Given $\div[u_1,v_1,n_1]^*$ with $\deg u_1 > g+1$, compute $\div[u_2,v_2,n_2]^*$ with $\deg u_2 \le \deg u_1-2$ such that $$\div[u_1,v_1,n_1]^*\ \sim\ \div[u_2,v_2,n_2]^*.$$
\begin{enumerate}[1.]
\item Let $u_2$ be $(f-v_1^2)/u_1$ made monic and let $v_2\coloneqq -v_1\bmod u_2$.
\item If $\deg v_1=g+1$ and $\lc(v_1)=1$ then let $\delta\coloneqq \deg u_1-(g+1)$;\\
else if $\deg v_1=g+1$ and $\lc(v_1)=-1$ then let $\delta\coloneqq g+1-\deg u_2$; \\
else let $\delta\coloneqq (\deg u_1-\deg u_2)/2$.
\item Output $\div[u_2,v_2, n_1+\delta]^*$.
\end{enumerate}
\medskip

\textsc{Reduce} decreases the degree of the affine part of its input by at least $2$, so at most $\lfloor(g-1)/2\rfloor$ calls to \textsc{Reduce} suffice to reduce the output of \textsc{Compose} to a linearly equivalent divisor whose affine part has degree at most $g+1$.  Having obtained a divisor $\div[u,v,n]^*$ with $\deg u\le g+1$, we need to compute the unique representative of its divisor class.
Now if $\lceil g/2\rceil\le n \le \lceil 3g/2\rceil - \deg u$, then $\deg u \le g$ and
\[
\div[u,v,n]^* = \div[u,v] + (n-\lceil g/2\rceil)P_\infty + (\lceil 3g/2\rceil -\deg u- n)\overline P_\infty + D_\infty - 2D_\infty,
\]
so we can simply take $\div[u,v,n-\lceil g/2\rceil]$ as our unique representative.
The following algorithm ``adjusts'' $\div[u,v,n]^*$ until $n$ is within the desired range; it can be viewed as composition with a principal divisor supported at infinity followed by reduction.
\medskip

\noindent
\textbf{Algorithm} \textsc{Adjust}

\noindent
Given $\div[u_1,v_1,n_1]^*$ with $\deg u_1 \le g+1$ compute $\div[u_2,v_2,n_2]$ such that
$$\div[u_1,v_1,n_1]^*\ \sim\ \div[u_2,v_2,n_2].$$
\begin{enumerate}[1.]
\item If $n_1 \ge \lceil g/2\rceil$ and $n_1 \le \lceil 3g/2\rceil-\deg u_1$ then output $\div[u_1,v_1,n_1-\lceil g/2\rceil]$ and terminate.
\item If $n_1 < \lceil g/2\rceil$, let $\hat v_1\coloneqq v_1 - V + (V \bmod u_1)$, let $u_2$ be $(f-\hat v_1^2)/u_1$ made monic, let $v_2\coloneqq -\hat v_1\bmod u_2$, and let $n_2\coloneqq n_1+g+1-\deg u_2$.
\item If $n_1 \ge \lceil g/2\rceil$, let $\hat v_1\coloneqq v_1 + V - (V \bmod u_1)$, let $u_2$ be $(f-\hat v_1^2)/u_1$ made monic, let $v_2\coloneqq -\hat v_1\bmod u_2$, and let $n_2\coloneqq n_1+\deg u_1-(g+1)$.
\item Output \textsc{Adjust}($\div[u_2,v_2,n_2]^*$).
\end{enumerate}
\medskip

The polynomial $u_2$ computed in step 2 or 3 of \textsc{Adjust} has degree at most $g$ (this is guaranteed by $\deg (f-V^2)\le g$ and $\deg v_1 < \deg u_1$).  If $\deg u_1\le g$ then \textsc{Adjust} either terminates or outputs a value for $n_2$ that is strictly closer to the desired range than $n_1$, and if $\deg u_1=g+1$ then \textsc{Adjust} outputs a divisor whose affine part has strictly lower degree with $n_2$ no further from the desired range than $n_1$.
Thus it always makes progress, and the total number of non-trivial calls to \textsc{Adjust} (those that do not terminate in step 1) is at most $\lceil g/2\rceil+1$.

We now give the general algorithm for adding rational divisor classes.
\medskip

\noindent
\textbf{Algorithm} \textsc{Addition}

\noindent
Given $\div[u_1,v_1,n_1]$ and $\div[u_2,v_2,n_2]$, compute $\div[u_3,v_3,n_3]\ \sim\ \div[u_1,v_1,n_1] + \deg[u_2,v_2,n_2]$.
\begin{enumerate}[1.]
\item Set $\div[u,v,n]^*\leftarrow \textsc{Compose}(\div[u_1,v_1,n_1],\div[u_2,v_2,n_2])$.
\item While $\deg u > g+1$, set $\div[u,v,n]^* \leftarrow \textsc{Reduce}(\div[u,v,n]^*)$.
\item Output $\textsc{Adjust}(\div[u,v,n]^*)$.
\end{enumerate}
\medskip

Note that \textsc{Addition} is fully general; the supports of its inputs may overlap, and it can be used with hyperelliptic curves of any genus, so long as the curve has a model with two rational points at infinity (always true over a sufficiently large finite field).

Let us now analyze the behavior of \textsc{Addition} in the typical case (which will be overwhelmingly dominant when $k$ is a large finite field).  We generically expect divisors to have affine parts of degree~$g$, and even when the two inputs to \textsc{Addition} coincide, we expect the GCD computed in step 1 of \textsc{Compose} to be trivial.

More specifically, we expect the following to occur in a typical call to \textsc{Addition}:
\begin{itemize}
\item The inputs will satisfy $\deg u_1=\deg u_2=g$, $\deg v_1=\deg v_2=g-1$, and $n_1=n_2=0$.
\item The divisor $\div[u,v,n]^*$ output by \textsc{Compose} will have $\deg u=2g$, $\deg v=2g-1$, and $n=0$.
\item Each call to \textsc{Reduce} will reduce the affine degree by 2 and increase $n$ by 1.
\item The input to \textsc{Adjust} will have $\deg u = g+1$ if $g$ is odd, $\deg u =g$ if $g$ is even, and $n=\lfloor g/2\rfloor$.
\item If $g$ is even \textsc{Adjust} will simply set $n$ to 0 and return.  If $g$ is odd \textsc{Adjust} will reduce the degree of $u$ from $g+1$ and increase $n$ by $1$ in the initial call, and then set $n$ to 0 and return.
\end{itemize}

It is worth comparing this to Cantor's algorithm for hyperelliptic curves with a rational Weierstrass point, which instead uses a model $y^2=f(x)$ for $C$ with $\deg f=2g+1$.  If we remove the steps related to maintaining the integers $n$, all of which have negligible cost, the algorithms \textsc{Compose} and \textsc{Reduce} are identical to those used in Cantor's algorithm; the only difference is that in Cantor's algorithm there is no analog of \textsc{Adjust}.
But note that in the typical odd genus case, Cantor's algorithm will need to call \textsc{Reduce} when $\deg u$ reaches $g+1$,
and this is essentially equivalent to calling \textsc{Adjust} in the typical odd genus case.

In summary, the asymptotic complexity of \textsc{Addition} in the typical case is effectively identical to that of Cantor's algorithm; the only meaningful difference is that the degree of the curve equation is $2g+2$ rather than $2g+1$, and this increases the complexity of various operations by a factor of $1+O(1/g)$.

We conclude this section with an algorithm to compute the additive inverse of a divisor class.\footnote{We correct a typo that appears in step 4 of the Divisor Inversion algorithms given in \cite{GHM} and \cite{Morales} ($m_1$ should be $n_1$).}
\medskip

\noindent
\textbf{Algorithm} \textsc{Negation}

\noindent
Given $\div[u_1,v_1,n_1]$, compute $\div[u_2,v_2,n_2]\ \sim\ -\div[u_1,v_1,n_1]$.
\begin{enumerate}[1.]
\item If $g$ is even, output $\div[u_1, -v_1, g-\deg u_1-n_1]$ and terminate.
\item If $n_1 > 0$, output $\div[u_1, -v_1, g-\deg u_1-n_1+1]$ and terminate.
\item Output \textsc{Adjust}($\div[u_1,-v_1, \lceil3 g/2\rceil -\deg u_1 +1]^*$).
\end{enumerate}
\medskip

Perhaps surprisingly, negation is the one operation that is substantially more expensive when the genus is odd (it is trivial when the genus is even).  In the typical case we will have $n_1=0$ and the call to \textsc{Adjust} will need to perform a reduction step.

\section{Explicit formulas in genus 3}

We now specialize to the case $g=3$ and give explicit straight-line formulas for the two most common cases of \textsc{Addition}: adding divisors with affine parts of degree 3 and disjoint support, and doubling a divisor with affine part of degree 3.
We also give a formula for \textsc{Negation} in the typical case.

We assume the curve equation is $y^2=f(x)$ where $f(x)=\sum_{i=0}^8 f_ix_i$ is monic of degree 8 (so $f_8=1$); we also assume that $f_7=0$, which can be achieved via the linear substitution $x\to x-f_7/8$. This implies that our precomputed monic polynomial $V=\sum_{i=0}^4 V_ix^i$ with $\deg (f-V^2)\le 3$ has $V_3=0$.

\subsection{Addition in the typical case}\label{sec:TypicalAddition}

Unraveling the execution of \textsc{Addition} in the typical case for $g=3$ with $\deg u_1=\deg u_2=3$, and $\gcd(u_1,u_2)=1$ yields the following algorithm.
\medskip

\noindent
\textbf{Algorithm} \textsc{TypicalAddition} (preliminary version)

\noindent
Given $\div[u_1,v_1,0]$ and $\div[u_2,v_2,0]$ with $\deg u_1=\deg u_2=3$ and $\gcd(u_1,u_2)=1$, compute $$\div[u_5,v_5,n_5]\sim \div[u_1,v_1,0]+\div[u_2,v_2,0].$$

\begin{enumerate}[1.]
\item Compute $c_1,c_2\in k[x]$ such that $c_1u_1+c_2u_2=1$.
\item Compute $u_3\coloneqq u_1u_2$ and $v_3\coloneqq (c_1u_1v_2+c_2u_2v_1)\bmod u_3$ (we have $\deg u_3=6$ and $n_3=0$).
\item Let $u_4$ be $(f-v_3^2)/u_3$ made monic, and let $v_4\coloneqq -v_3\bmod u_4$ (we have $\deg u_4=4$ and $n_4=1$).
\item Let $\hat v_4\coloneqq v_4-V + (V\bmod u_4)$, let $u_5$ be $(f-\hat v_4^2)/u_4$ made monic, and let $v_5\coloneqq -\hat v_4\bmod u_5$.
\item Output $\div[u_5,v_5,3-\deg u_5]$.
\end{enumerate}
\medskip

As first proposed by Harley in \cite{GH,Harley} for genus 2 curves and subsequently exploited and generalized by many authors, the straight-line program obtained by unrolling the loop in Cantor's algorithm \cite{Cantor} in the typical case can be optimized in two ways. The first is to avoid the GCD computation in step~1 by applying the Chinese remainder theorem to the ring $k[x]/(u_3) = k[x]/(u_1u_2)\simeq k[x]/(u_1)\times k[x]/(u_2)$ to compute
\[
v_3 = \left((v_2-v_1)\,u_1^{-1}\bmod u_2\right)u_1+v_1.
\]
where $u_1^{-1}$ denotes the inverse of $u_1$ modulo $u_2$ (here we use $\gcd(u_1,u_2)=1$).
This expression for $v_3$ has degree at most 5, which is less than $\deg u_3=6$, so there is no need to reduce modulo~$u_1u_2$.

The second optimization is to combine composition with the reduction step, in which we compute $u_4$ as $(f-v_3^2)/u_3$ made monic and $v_4\coloneqq -v_3\bmod u_4$.  If we put $\tilde{s}\coloneqq (v_2-v_1)u_1^{-1}\bmod u_2$, then $u_4$ is
\[
\frac{f-(\tilde su_1+v_1)^2}{u_1u_2} = \frac{(f-v_1^2)/u_1 - \tilde s(\tilde s u_1+2v_1)}{u_2}
\]
made monic.  All the divisions are exact and $u_4$ has degree at most 4, so we only need know the top 3 coefficients of $w\coloneqq (f-v_1^2)/u_1=x^5-u_{12}x^4+(f_6+u_{12}^2-u_{11})x^3+\cdots$, which do not depend on $v_1$ (here we have used $f_7=0$).  To simplify matters we assume $\deg s=2$ (which will typically be true), so that $\deg u_4=4$.
If we let $s$ be $\tilde s$ made monic and put $c\coloneqq 1/\lc(\tilde s)$ and $z\coloneqq su_1$, then
\[
u_4 = (s(z+2cv_2)-c^2w)/u_2\qquad\text{and}\qquad v_4 = -v_1-c^{-1}(z\bmod u_4).
\]

These optimizations are exactly the same as those used to obtain existing explicit formulas that optimize Cantor's algorithm for hyperelliptic curves of genus 3 with a rational Weierstrass point using Harley's approach; see \cite[Alg. 3]{WPP}, for example.
We now discuss a further optimization that is specific to the balanced divisor approach.
Rather than computing $v_4$, we may proceed directly to the computation of $\hat v_4 \coloneqq v_4-V+(V \bmod u_4)$, which is needed to compute $u_5$ as $(f-\hat v_4^2)/u_4$ made monic.
Now $V$ and~$u_4$ are monic of degree 4, so $-V+(V \bmod u_4)=-u_4$ does not depend on $V$, and
\[
\tilde v_4\coloneqq -\hat v_4=u_4-v_4 = u_4+v_1+c^{-1}(z \bmod u_4)
\]
is a monic polynomial of degree 4 that we may use to compute $u_5$ as $(f-\tilde{v}_4^2)/u_4$ made monic and $v_5= \tilde{v}_4\bmod u_5$.

There is a notable difference here with the formulas used for genus 3 hyperelliptic curves with a rational Weierstrass point, where the corresponding expression $(f-v_4^2)/u_4$ is already monic, since $\deg v_4 \le 3$.
But $(f-\tilde{v}_4^2)/u_4$ is not monic; its leading coefficient is $-2\tilde{v}_{43}$, where $\tilde{v}_{43}$ denotes the cubic coefficient of~$\tilde{v}_4$.  Expanding the equations for $u_4,v_4,\tilde{v}_4$ above yields the identity
\begin{equation}\label{eq:vt43add}
\tilde{v}_{43} = u_{12}-u_{22}+c+2s_1+c^{-1}(u_{21}+s_1(s_1-u_{22})-s_0).
\end{equation}

We now give an optimized version of \textsc{TypicalAddition} that forms the basis of our explicit formula.
\medskip

\noindent
\textbf{Algorithm} \textsc{TypicalAddition}

\noindent
Given $\div[u_1,v_1,0]$ and $\div[u_2,v_2,0]$ with $\deg u_1=\deg u_2=3$ and $\gcd(u_1,u_2)=1$, compute $$\div[u_5,v_5,n_5]\ \sim\ \div[u_1,v_1,0]+\div[u_2,v_2,0].$$

\begin{enumerate}[1.]
\item Compute $w \coloneqq (f-v_1^2)/u_1$, and $\tilde s \coloneqq (v_2-v_1)\,u_1^{-1} \bmod u_2$.
\item Compute $c\coloneqq\lc(\tilde s)^{-1}$ and $s=c\tilde s$ and $z \coloneqq su_1$ (require $\deg s = 2$).
\item Compute $u_4\coloneqq(s(z+2cv_1)-c^2w)/u_2$ and $\tilde v_4\coloneqq v_1+u_4+c^{-1}(z\bmod u_4)$.
\item Compute $u_5\coloneqq (2\tilde v_{43})^{-1}(\tilde{v}_4^2-f)/u_4$ and $v_5\coloneqq\tilde v_4\bmod u_5$ (require $\tilde v_{43}\ne 0$).
\item Output $\div[u_5,v_5,3-\deg u_5]$.
\end{enumerate}
\medskip

When expanding \textsc{TypicalAddition} into an explicit formula there are several standard optimizations that one may apply.  These include the use of Karatsuba and Toom style polynomial multiplication, fast algorithms for exact division, the use of Bezout's matrix for computing resultants, and Montgomery's method for combining field inversions.
The last is particular relevant to us, as we require three inversions: the inverse of the resultant $r\coloneqq \Res(u_1,u_2)$ used to compute $u_1^{-1}\bmod u_2$, as well as the inverses of  $\lc(\tilde s)$ and $\tilde{v}_{43}$.
We may use equation \eqref{eq:vt43add} to calculate $\tilde{v}_{43}$ earlier than it is actually needed so that we can invert all three quantities simultaneously using Montgomery's trick: compute $(r\lc(\tilde s)\tilde{v}_{43})^{-1}$ using one field inversion, and then use multiplications to obtain the desired inverses.
We omit the details of these well-known techniques and refer the interested reader to \cite[IV]{WPP}.

An explicit formula that implements \textsc{TypicalAddition} appears in the \hyperref[sec:appendix]{Supplementary Materials} section.  It includes a single exit point where we may revert to the general \textsc{Addition} algorithm if any of our requirements for typical divisors are not met: it verifies the assumptions $\gcd(u_1,u_2)=1$, $\deg s=2$, and $\tilde v_{43}\ne 0$.  This makes it unnecessary to verify $\gcd(u_1,u_2)=1$ before applying the formula.

We give field operation counts for each step in the form $[i\mathbf{I}+m\mathbf{M}+a\mathbf{A}]$, where $i$ denotes the number of field inversions, $m$ is the number of field multiplications (including squarings), and $a$ is the number of additions or subtractions of field elements.
The count $a$ includes multiplications by 2, and also divisions by 2, which can be efficiently implemented using a bit-shift (possibly preceded by an integer addition) and costs no more than a typical field addition.
The divisions by 2 arise primarily in places where we have used Toom-style multiplications and could easily be removed if one wished to adapt the formula to characteristic 2 by switching to Karatsuba.

The total cost of the formula for \textsc{TypicalAddition} is \textbf{I+79M+127A}; this is within 10 or 20 percent of the \textbf{I+67M+108A} cost of the best known formula for addition on genus~3 hyperelliptic curves with a rational Weierstrass point \cite{NMCT} (the exact ratio depends on the cost of field inversions relative to multiplications).\footnote{The formula in \cite{NMCT} contains some typographical errors; see \cite[p.\,25]{FWG} for a clean version.}
Aside from increasing the degree of $f$, the main difference in the two formulas is the need to compute and invert $\hat{v}_{43}$, and to then multiply by this inverse to make $u_5$ monic.
By comparison, the cost of a na\"ive implementation of the unoptimized version of \textsc{TypicalAddition} that uses standard algorithms for multiplication, division with remainder, and GCD (as in \cite[Ch.\,1]{GG}, for example), in which we do not count multiplications or divisions by 1, is \textbf{5I+275M+246A} (c.f. \cite[p.\ 445]{Nagao}).
Our optimizations thus improve performance by a factor of 4 or 5, in terms of the cost of field operations.
In practice the speedup is better than this, closer to $6\times$ when working over word-sized finite fields.
This is due largely to the removal of almost all conditional logic from the explicit formula.

\subsection{Doubling in the typical case}

When doubling a divisor the inputs to \textsc{Addition} are identical, but the GCD computed in \textsc{Compose} is still trivial in the typical case where $\gcd(u_1,v_1)=1$ with $\deg u_1=3$.
The divisor $\div[u_3,v_3,n_3]$ output by \textsc{Compose} will have $u_3=u_1^2$ and $v_3=(c_1u_1v1+c_3(v_1^2+f))\bmod u_1^2$, where $c_1u_1+2c_3v_1=1$.
In this situation we have $v_3\equiv v_1\bmod u_1$, and since both $\div[u_1,v_1]$ and $\div[u_3,v_3]$ are Mumford representations of semi-reduced divisors, we have $u_1|(v_1^2-f)$ and $u_1^2|(v_3^2-f)$.
We may thus view $v_1$ as a square root of $f$ modulo $u_1$, and we may view $v_3$ as a ``lift'' of this square root from $k[x]/(u_1)$ to $k[x]/(u_1^2)$.
Rather than computing $v_3$ as in \textsc{Compose}, as suggested in \cite{GH} we may instead compute it using a single $u_1$-adic Newton iteration:
\[
v_3 :=v_1-\frac{v_1^2-f}{2v_1}\bmod u_1^2.
\]
If we put $w\coloneqq (f-v_1^2)/u_1$ and define $\tilde s\coloneqq w(2v_1)^{-1}\bmod u_1$, where $(2v_1)^{-1}$ denotes the inverse of $2v_1$ modulo $u_1$ (here we use $\gcd(u,v_1)=1$), then $v_3 = v_1+\tilde su_1$, and $u_4$ is
\[
\frac{f-(v_1+\tilde su_1)^2}{u_1^2} = \frac{w-2v_1\tilde s}{u_1} - \tilde{s}^2
\]
made monic.  We now proceed as in \S\ref{sec:TypicalAddition}.
We assume $\deg \tilde s=2$, let $s$ be $\tilde{s}$ made monic, and define $c\coloneqq \lc(\tilde s)^{-1}$ and $z\coloneqq su_1$.
We then have
\[
u_4 = s^2-(c^2w-2cv_1s)/u_1\qquad\text{and}\qquad v_4=-v_1-c^{-1}(z\bmod u_4),
\]
and
\[
\tilde v_4\coloneqq -\hat v_4 = u_4-v_4 = u_4+v_1+c^{-1}(z\bmod u_4)
\]
is a monic polynomial of degree 4 that we may use to compute $u_5$ as $(f-\tilde v_4^2)/u_4$ made monic and $v_5=\tilde v_4\bmod u_5$.
The polynomial $(f-\tilde v_4^2)/u_4$ has leading coefficient $-2\tilde v_{43}$, and expanding the equations for $u_4,v_4,\tilde v_4$ yields the identity
\begin{equation}\label{eq:vt43dbl}
\tilde v_{43} = 2s_1+c+c^{-1}(s_1(s_1-u_{12})-s_0+u_{11}).
\end{equation}
This leads to the following optimized formula for doubling a typical divisor.
\medskip

\noindent
\textbf{Algorithm} \textsc{TypicalDoubling}

\noindent
Given $\div[u_1,v_1,0]$ with $\deg u_1=3$ and $\gcd(u_1,v_1)=1$, compute $$\div[u_5,v_5,n_5]\ \sim\ 2\div[u_1,v_1,0].$$

\begin{enumerate}[1.]
\item Compute $\overline w \coloneqq (f-v_1^2)/u_1\bmod u_1$, and $\tilde s \coloneqq \overline w(2v_1)^{-1} \bmod u_1$.
\item Compute $c\coloneqq \lc(\tilde s)^{-1}$, and $s\coloneqq c\tilde{s}$ and $z\coloneqq su_1$ (require $\deg s=2$).
\item Compute $u_4\coloneqq (c^2w-2csv_1)/u_1-s^2$ and $\tilde v_4 \coloneqq v_1+u_4+c^{-1}(z\bmod u_4)$.
\item Compute $u_5\coloneqq (2\tilde v_{43})^{-1}(\tilde v_4^2-f)/u_4$ and $v_5\coloneqq \tilde v_4\bmod u_5$ (require $\tilde v_{43} \ne 0$).
\item Output $\div[u_5,v_5,3-\deg u_5]$.
\end{enumerate}
\medskip

An explicit formula that implements \textsc{TypicalDoubling} appears in the  \hyperref[sec:appendix]{Supplementary Materials} section.
In terms of field operations, its total cost is \textbf{I+82M+127A}, which may be compared with \textbf{I+68M+102A} for the best known formula for a genus 3 curve with a rational Weierstrass point \cite{NMCT}, and \textbf{5I+285M+258A} for the unoptimized cost of doubling a typical divisor.

\subsection{Negation in the typical case}
Finally, we consider the case of negating a typical divisor $\div[u_1,v_1,0]$ with $\deg u_1=3$, which amounts to computing \textsc{Adjust}($\div[u_1,-v_1,3]^*$).
Let
\[
\tilde v_1  \coloneqq v_1-V+(V\bmod u_1) = -x_4+\tilde v_{12}x^2+\tilde v_{11}x+\tilde v_{10}
\]
(here we have used $V_3=0$).  We wish to compute $u_2$ as $(f-\tilde v_1^2)/u_1$ made monic and $v_2\coloneqq \tilde v_1\bmod u_2$.
The polynomial $(f-\tilde{v}_1)^2/u_1$ has degree 3 and leading coefficient $f_6+2\tilde v_{12}$, where
\[
\tilde v_{12} = v_{12} + u_{12}^2-u_{11}.
\]
We thus obtain the following algorithm.
\medskip

\noindent
\textbf{Algorithm} \textsc{TypicalNegation}

\noindent
Given $\div[u_1,v_1,0]$ with $\deg u_1=3$, compute $\div[u_2,v_2,n_2]\ \sim\ -\div[u_1,v_1,0].$

\begin{enumerate}[1.]
\item Compute $\tilde v_1\coloneqq v_1-V+(V\bmod u_1)$.
\item Compute $u_2 \coloneqq (f_6+2\tilde v_{12})^{-1}(f-\tilde v_1^2)/u_1$ and $v_2 \coloneqq \tilde v_1\bmod u_2$ (require $f_6+2\tilde v_{12}\ne 0$).
\item Output $\div[u_2,v_2,0]$.
\end{enumerate}
\medskip

\bigskip
\bigskip
\bigskip
\bigskip
\section*{Supplementary material}\label{sec:appendix}

The explicit formulas presented on the following pages were typeset using latex source generated by an automated script that reads an executable version of verified source code; they should thus be free of the typos that unfortunately plague many of the formulas one finds in the literature. Magma source code for the formulas and an implementation of all the algorithms in this article and code to test their correctness can be found at
\begin{center}
\url{https://math.mit.edu/~drew/BalancedDivisor.m}
\end{center}

\begin{table}
\small
\setlength{\extrarowheight}{0.5pt}

\begin{tabular}{|l|}
\hline
\textbf{\textsc{TypicalAddition}}: $\div[u_5,v_5,n_5]\ \sim\ \div[u_1,v_1,0]+\div[u_2,v_2,0]$ with $\gcd(u_1,u_2)=1$.\\
1. Compute $r := \Res(u_1,u_2)$ and $i(x) = i_2x^2 + i_1x + i_0 := ru_1^{-1} \bmod u_2$ (and $w_0:=u_{11}-u_{12}$).\opcount{15M+12A}\\
\hline
$t_{1}:=u_{10}-u_{20};\quad t_{2}:=u_{11}-u_{21};\quad w_{0}:=u_{12}-u_{22};\quad t_{3}:=t_{2}-u_{22}w_{0};$\\
$t_{4}:=t_{1}-u_{21}w_{0};\quad t_{5}:=u_{22}t_{3}-t_{4};\quad t_{6}:=u_{20}w_{0}+u_{21}t_{3};$\\
$i_{0}:=t_{4}t_{5}-t_{3}t_{6};\quad i_{1}:=w_{0}t_{6}-t_{2}t_{5};\quad i_{2}:=w_{0}t_{4}-t_{2}t_{3};$\\
$r:=t_{1}i_{0}-u_{20}(t_{3}i_{2}+w_{0}i_{1});$\\
\hline
2. Compute $q(x) = q_2x^2+q_1x+q_0 := r(v_2-v_1)\,u_1^{-1} \bmod u_2.$\opcount{10M+30A}\\
\hline
$t_{1}:=v_{20}-v_{10};\quad t_{2}:=v_{11}-v_{21};\quad t_{3}:=v_{12}-v_{22};\quad t_{4}:=t_{2}i_{1};\quad t_{5}:=t_{1}i_{0};\quad t_{6}:=t_{3}i_{2};\quad t_{7}:=u_{22}t_{6};$\\
$t_{8}:=t_{4}+t_{6}+t_{7}-(t_{2}+t_{3})(i_{1}+i_{2});\quad t_{9}:=u_{20}+u_{22};\quad t_{10}:=(t_{9}+u_{21})(t_{8}-t_{6});\quad t_{11}:=(t_{9}-u_{21})(t_{8}+t_{6});$\\
$q_{0}:=t_{5}-u_{20}t_{8};$\\
$q_{1}:=t_{4}-t_{5}+(t_{11}-t_{10})/2-t_{7}+(t_{1}-t_{2})(i_{0}+i_{1});$\\
$q_{2}:=t_{6}-q_{0}-t_{4}+(t_{1}-t_{3})(i_{0}+i_{2})-(t_{10}+t_{11})/2;$\\
\hline
3. Compute $t_1 := rq_2\tilde{v}_{43}$ via \eqref{eq:vt43add}, and $w_1 := c^{-1} = q_2/r,\ \ w_2 := c = r/q_2,\ \ w_3:=c^2,\ \ w_4:=(2\tilde{v}_{43})^{-1}$.\\
\hspace{12pt}Then compute $s(x) = x^2+s_1x + s_0:= c(v_2-v_1)\,u_1^{-1} \bmod u_2$ and $\tilde{v}_{43}$.\opcount{I+18M+6A}\\
\hline
$t_{1}:=(r+q_{1})^2+q_{2}(rw_{0}+q_{2}u_{21}-q_{1}u_{22}-q_{0});\quad t_{2}:=2t_{1};\quad t_{3}:=rq_{2};$\\
If $t_2=0$ or $t_3=0$ then abort (revert to \textsc{Addition}).\\
$t_{4}:=1/(t_{2}t_{3});\quad t_{5}:=t_{2}t_{4};\quad t_{6}:=rt_{5};$\\
$w_{1}:=t_{5}q_{2}^2;\quad w_{2}:=rt_{6};\quad w_{3}:=w_{2}^2;\quad w_{4}:=t_{3}^2t_{4};$\\
$s_{0}:=t_{6}q_{0};\quad s_{1}:=t_{6}q_{1};$\\
$\tilde{v}_{43}:=t_{1}t_{5};$\\
\hline
4. Compute $z(x) = x^5+z_4x^4+z_3x^3+z_2x^2+z_1x+z_0:=su_1$.\opcount{4M+15A}\\
\hline
$t_{6}:=s_{0}+s_{1};\quad t_{1}:=u_{10}+u_{12};\quad t_{2}:=t_{6}(t_{1}+u_{11});\quad t_{3}:=(t_{1}-u_{11})(s_{0}-s_{1});\quad t_{4}:=u_{12}s_{1};$\\
$z_{0}:=u_{10}s_{0};\quad z_{1}:=(t_{2}-t_{3})/2-t_{4};\quad z_{2}:=(t_{2}+t_{3})/2-z_{0}+u_{10};\quad z_{3}:=u_{11}+s_{0}+t_{4};\quad z_{4}:=u_{12}+s_{1};$\\
\hline
5. Compute $u_4(x) = x^4+u_{43}x^3+u_{42}x^2+u_{41}x+u_{40}:= (s(z+2cv_1)-c^2(f-v_1^2)/u_1)/u_2.$\opcount{14M+31A}\\
\hline
$u_{43}:=z_{4}+s_{1}-u_{22};$\\
$t_{0}:=s_{1}z_{4};\quad t_{1}:=u_{22}u_{43};$\\
$u_{42}:=z_{3}+t_{0}+s_{0}-w_{3}-u_{21}-t_{1};$\\
$t_{2}:=u_{21}u_{42};\quad t_{3}:=(u_{21}+u_{22})(u_{42}+u_{43})-t_{1}-t_{2};\quad t_{4}:=2w_{2};$\\
$t_{5}:=t_{4}v_{12};\quad t_{6}:=s_{0}z_{3};\quad t_{7}:=(s_{0}+s_{1})(z_{3}+z_{4})-t_{0}-t_{6};$\\
$u_{41}:=z_{2}+t_{7}+t_{5}+w_{3}u_{12}-u_{20}-t_{3};$\\
$u_{40}:=z_{1}+s_{1}(t_{5}+z_{2})+t_{6}+t_{4}v_{11}-w_{3}(f_{6}+u_{12}^2-u_{11})-u_{20}u_{43}-t_{2}-u_{22}u_{41};$\\
\hline
6. Compute $\tilde{v}_4(x) = x^4+\tilde{v}_{43}x^3+\tilde{v}_{42}x^2+\tilde{v}_{41}x+\tilde{v}_{40}:= -\hat{v}_4 =  v_1 + u_4 + c^{-1}(z \bmod u_4)$.\opcount{6M+10A}\\
\hline
$t_{1}:=u_{43}-z_{4}+w_{2};$\\
$\tilde{v}_{40}:=v_{10}+w_{1}(z_{0}+u_{40}t_{1});$\\
$\tilde{v}_{41}:=v_{11}+w_{1}(z_{1}-u_{40}+u_{41}t_{1});$\\
$\tilde{v}_{42}:=v_{12}+w_{1}(z_{2}-u_{41}+u_{42}t_{1});$\\
\hline
7. Compute $u_5(x) = x^3+u_{52}x^2+u_{51}x+u_{50} := (2\tilde v_{43})^{-1}(\tilde{v}_4^2-f)/u_4$. \opcount{9M+17A}\\
\hline
$u_{52}:=\tilde{v}_{43}/2+w_{4}(2\tilde{v}_{42}-f_{6})-u_{43};$\\
$u_{51}:=w_{4}(2(\tilde{v}_{41}+\tilde{v}_{43}\tilde{v}_{42})-f_{5})-u_{52}u_{43}-u_{42};$\\
$u_{50}:=w_{4}(\tilde{v}_{42}^2+2(\tilde{v}_{40}+\tilde{v}_{43}\tilde{v}_{41})-f_{4})-u_{51}u_{43}-u_{52}u_{42}-u_{41};$\\
\hline
8. Compute $v_5(x) = v_{52}x^2+v_{51}x+v_{50}:= \tilde{v}_4 \bmod u_5$.\opcount{3M+6A}\\
\hline
$t_{1}:=u_{52}-\tilde{v}_{43};$\\
$v_{50}:=\tilde{v}_{40}+t_{1}u_{50};$\\
$v_{51}:=\tilde{v}_{41}-u_{50}+t_{1}u_{51};$\\
$v_{52}:=\tilde{v}_{42}-u_{51}+t_{1}u_{52};$\\
\hline
9. Output $\div[u_5,v_5,3-\deg u_5]$.\opcount{Total: I+79M+127A}\\
\hline
\end{tabular}
\end{table}

\begin{table}
\small
\begin{tabular}{|l|}
\hline
\textbf{\textsc{TypicalDoubling}}: $\div[u_5,v_5,n_4]\ \sim\ 2\div[u_1,v_1,0]$ with $\gcd(u_1,v_1)=1$.\\
1. Compute $r := \Res(u_1,v_1)$ and $i(x)=i_2x^2+i_1x+i_0 := rv_1^{-1} \bmod u_1$\  ($w_0:=v_{11}-u_{12}v_{12}$).\opcount{15M+9A}\\
\hline
$w_{0}:=v_{11}-u_{12}v_{12};\quad t_{2}:=v_{10}-u_{11}v_{12};\quad t_{3}:=u_{12}w_{0}-t_{2};\quad t_{4}:=u_{10}v_{12}+u_{11}w_{0};$\\
$i_{0}:=w_{0}t_{4}-t_{2}t_{3};\quad i_{1}:=v_{11}t_{3}-v_{12}t_{4};\quad i_{2}:=v_{11}w_{0}-v_{12}t_{2};$\\
$r:=v_{10}i_{0}-u_{10}(w_{0}i_{2}+v_{12}i_{1});$\\
\hline
2. Compute $p(x)=p_2x^2+p_1x+p_0 := \overline{w} := (f-v_1^2)/u_1 \bmod u_1$ ($w_1:=u_{12}^2,\ w_2:=w_1+f_6$).\ \ \ \opcount{11M+24A}\\
\hline
$w_{1}:=u_{12}^2;\quad t_{2}:=2u_{10};\quad t_{3}:=3u_{11};\quad w_{2}:=w_{1}+f_{6};\quad t_{5}:=2t_{2}-f_{5};\quad t_{6}:=2u_{12};\quad t_{7}:=t_{3}-w_{2};$\\
$p_{2}:=f_{5}+t_{6}(t_{7}-w_{1})-t_{2};$\\
$p_{1}:=f_{4}+u_{12}t_{5}-v_{12}^2-u_{11}(2f_{6}-t_{3})-w_{1}(t_{7}+t_{3});$\\
$p_{0}:=f_{3}-u_{11}(w_{1}t_{6}-t_{5})-t_{2}w_{2}-u_{12}p_{1}-2v_{11}v_{12};$\\
\hline
3. Compute $q(x)= q_2x^2+q_1x+q_0 := r((f-v_1^2)/u_1)v_1^{-1} \bmod u_1$. \opcount{10M+28A}\\
($w_3:=u_{10}+u_{11}+u_{12},\ \ w_4:=u_{10}-u_{11}+u_{12}$)\\
\hline
$t_{1}:=i_{1}p_{1};\quad t_{2}:=i_{0}p_{0};\quad t_{3}:=i_{2}p_{2};\quad t_{4}:=u_{12}t_{3};\quad t_{5}:=(i_{1}+i_{2})(p_{1}+p_{2})-t_{1}-t_{3}-t_{4};\quad t_{6}:=u_{10}t_{5};$\\
$t_{7}:=u_{10}+u_{12};\quad w_{3}:=t_{7}+u_{11};\quad w_{4}:=t_{7}-u_{11};\quad t_{10}:=w_{3}(t_{3}+t_{5});\quad t_{11}:=w_{4}(t_{5}-t_{3});$\\
$q_{0}:=t_{2}-t_{6};$\\
$q_{1}:=t_{4}+(i_{0}+i_{1})(p_{0}+p_{1})+(t_{11}-t_{10})/2-t_{1}-t_{2};$\\
$q_{2}:=t_{1}+t_{6}+(i_{0}+i_{2})(p_{0}+p_{2})-t_{2}-t_{3}-(t_{10}+t_{11})/2;$\\
\hline
4. Compute $t_3 := 2rq_2\tilde{v}_{43}$ via \eqref{eq:vt43dbl}, and $w_5 := 1/c,\ \ w_6:=c, w_7 := 1/\tilde{v}_{43}$.\opcount{I+17M+7A}\\
\hspace{12pt}Then compute $s(x)=x^2+s_1x+s_0 := q/(2r)$ made monic and $\tilde{v}_{43}$.\\
\hline
$t_{0}:=2r;\quad t_{1}:=t_{0}^2;\quad t_{2}:=q_{2}^2;\quad t_{3}:=t_{1}-q_{0}q_{2}+q_{1}(2t_{0}+q_{1}-q_{2}u_{12})+t_{2}u_{11};$\\
If $q_2=0$ or $t_3=0$ then abort (revert to \textsc{Addition}).\\
$t_{4}:=1/(t_{0}q_{2}t_{3});\quad t_{5}:=t_{3}t_{4};\quad t_{6}:=t_{0}t_{5};$\\
$w_{5}:=t_{2}t_{5};\quad w_{6}:=t_{1}t_{5};\quad w_{7}:=t_{1}t_{2}t_{4};$\\
$s_{0}:=t_{6}q_{0};\quad s_{1}:=t_{6}q_{1};\quad \tilde{v}_{43}:=t_{3}t_{5};$\\
\hline
5. Compute $z(x)=x^5+z_4x^4+z_3x^3+z_2x^2+z_1x+z_0 := su_1$.\opcount{4M+12A}\\
\hline
$t_{1}:=w_{3}(s_{0}+s_{1});\quad t_{2}:=w_{4}(s_{0}-s_{1});\quad t_{3}:=u_{12}s_{1};$\\
$z_{0}:=s_{0}u_{10};\quad z_{1}:=(t_{1}-t_{2})/2-t_{3};\quad z_{2}:=(t_{1}+t_{2})/2-z_{0}+u_{10};\quad z_{3}:=u_{11}+s_{0}+t_{3};\quad z_{4}:=u_{12}+s_{1};$\\
\hline
6. Compute $u_4(x)=x^4+u_{43}x^3+u_{42}x^2+u_{41}x+u_{40} := s^2 - (c^2(f-v_1^2)/u_1-2csv_1)/u_1$.\opcount{9M+14A}\\
\hline
$t_{1}:=v_{12}w_{6};\quad t_{2}:=w_{6}^2;$\\
$u_{43}:=2s_{1};$\\
$u_{42}:=2s_{0}+s_{1}^2-t_{2};$\\
$u_{41}:=2(s_{0}s_{1}+u_{12}t_{2}+t_{1});$\\
$u_{40}:=s_{0}^2+2(w_{0}w_{6}+s_{1}t_{1})-t_{2}(w_{2}+2(w_{1}-u_{11}));$\\
\hline
7. $\tilde{v}_4(x) = \tilde{v}_{43}x^3+\tilde{v}_{42}x^2+\tilde{v}_{41}x+\tilde{v}_{40} := -\hat{v}_4 = v_1 + u_4 + c^{-1}(z \bmod u_4)$.  \opcount{6M+10A}\\
\hline
$t_{1}:=u_{43}-z_{4}+w_{6};$\\
$\tilde{v}_{40}:=v_{10}+w_{5}(z_{0}+u_{40}t_{1});$\\
$\tilde{v}_{41}:=v_{11}+w_{5}(z_{1}-u_{40}+u_{41}t_{1});$\\
$\tilde{v}_{42}:=v_{12}+w_{5}(z_{2}-u_{41}+u_{42}t_{1});$\\
\hline
8. $u_5(x)=x^3+u_{52}x^2+u_{51}x+u_{50} := (2\tilde{v}_{43})^{-1}(\tilde{v}_4^2-f)/u_4$.\opcount{7M+17A}\\
\hline
$u_{52}:=\tilde{v}_{43}/2+w_{7}(\tilde{v}_{42}-f_{6}/2)-u_{43};$\\
$u_{51}:=\tilde{v}_{42}+w_{7}(\tilde{v}_{41}-f_{5}/2)-u_{52}u_{43}-u_{42};$\\
$u_{50}:=\tilde{v}_{41}+w_{7}((\tilde{v}_{42}^2-f_{4})/2+\tilde{v}_{40})-u_{51}u_{43}-u_{52}u_{42}-u_{41};$\\
\hline
9. $v_5(x)=v_{52}x^2+v_{41}x+v_{50} := \tilde{v}_4 \bmod u_5$. \opcount{3M+6A}\\
\hline
$t_{1}:=u_{52}-\tilde{v}_{43};$\\
$v_{50}:=\tilde{v}_{40}+t_{1}u_{50};$\\
$v_{51}:=\tilde{v}_{41}-u_{50}+t_{1}u_{51};$\\
$v_{52}:=\tilde{v}_{42}-u_{51}+t_{1}u_{52};$\\
\hline
10. Output $\div[u_5,v_5,3-\deg u_5]$.\opcount{Total: I+82M+127A}\\
\hline
\end{tabular}
\end{table}

\begin{table}
\small
\begin{tabular}{|l|}
\hline
\textbf{\textsc{TypicalNegation}}: $\div[u_2,v_2,0]\ \sim \ -\div[u_1,v_1,0]$.\\
1. Compute $\tilde{v}_1(x) =-x^4+\tilde{v}_{12}x^2+\tilde{v}_{11}x+\tilde{v}_{10} := v_1  - V + (V \bmod u_1)$.\opcount{3M+5A}\\
\hline
$\tilde{v}_{12}:=v_{12}-u_{11}+u_{12}^2;$\\
$\tilde{v}_{11}:=v_{11}-u_{10}+u_{11}u_{12};$\\
$\tilde{v}_{10}:=v_{10}+u_{10}u_{12};$\\
\hline
2. Compute $u_2(x) = x^3+u_{22}x^2+u_{21}x+u_{20} := (f_6+2\tilde{v}_{12})^{-1}(f-\tilde{v}_1^2)/u_1$.\hspace{16pt}\opcount{I+8M+14A}\\
\hline
$t_{1}:=2\tilde{v}_{12};\quad t_{2}:=f_{6}+t_{1};$\\
If $t_1 = 0$ then abort (revert to \textsc{Negation}).\\
$t_{3}:=1/t_{2};$\\
$u_{22}:=t_{3}(f_{5}+2\tilde{v}_{11})-u_{12};$\\
$u_{21}:=t_{3}(f_{4}+2\tilde{v}_{10}-\tilde{v}_{12}^2)-u_{11}-u_{12}u_{22};$\\
$u_{20}:=t_{3}(f_{3}-t_{1}\tilde{v}_{11})-u_{10}-u_{11}u_{22}-u_{12}u_{21};$\\
\hline
3. Compute $v_2(x) = v_{22}x^2+v_{21}x+v_{20}:= \tilde{v}_1 \bmod u_2$.\opcount{3M+5A}\\
\hline
$v_{22}:=\tilde{v}_{12}-u_{22}^2+u_{21};$\\
$v_{21}:=\tilde{v}_{11}-u_{21}u_{22}+u_{20};$\\
$v_{20}:=\tilde{v}_{10}-u_{20}u_{22};$\\
\hline
4. Output $\div[u_2,v_2,0]$.\opcount{Total: I+14M+24A}\\
\hline
\end{tabular}
\end{table}

\end{document}